\documentclass[10pt]{article}
\usepackage{pgfplots}
\pgfplotsset{width=7cm,compat=1.8}
\usetikzlibrary{patterns}
\usepackage[latin1]{inputenc}
\usepackage{latexsym}
\usepackage{amsmath,amssymb,amsfonts,amsthm}
\usepackage{xcolor}
\usepackage{mathrsfs}
\usepackage{comment}
\usepackage{bbm}
\usepackage{paralist}

\usepackage{tikz}
\usetikzlibrary{calc}
\usetikzlibrary{decorations.pathreplacing}
\usetikzlibrary{arrows}
\usepgfplotslibrary{fillbetween}
\usepackage{mathtools}

\usepackage[toc,page]{appendix}
\usepackage{enumitem}

\usepackage[numbers,comma,sort]{natbib}

\definecolor{db}{RGB}{0, 0, 130}
\usepackage[colorlinks=true,citecolor=db,linkcolor=black,urlcolor=blue,pdfstartview=FitH]{hyperref}

\usepackage{pdfsync}

\definecolor{rp}{rgb}{0.25, 0, 0.75}
\definecolor{dg}{rgb}{0, 0.5, 0}

\newcommand{\apriori}{1+{\frac{-H\beta}{1+H(\beta-1)}}}
\newcommand{\aprioritwo}{1+{\frac{-H\beta^\prime}{1+H(\beta^\prime-1)}}}
\newcommand{\R}{\mathbb{R}}

\newcommand{\N}{\mathbb{N}}
\newcommand{\EE}{\mathbb{E}}

\newcommand{\bqn}{\begin{equation}}
\newcommand{\eqn}{\end{equation}}
\newcommand{\bqne}{\begin{equation*}}
\newcommand{\eqne}{\end{equation*}}
\newcommand{\blue}{\textcolor{black}}

\newcommand{\red}{\textcolor{black}}
\def \limbashaut#1#2#3{\mathrel{\mathop{\kern 0pt#1}\limits_{#2}^{#3}}}


\newcommand{\COO}[4]{\mathcal{C}^{#1}_{#4}(L^{#2,#3})}

\DeclareMathOperator{\supp}{supp}

\numberwithin{equation}{section}

\usepackage{todonotes}

\makeatletter
\newcommand{\customlabel}[2]{%
   \protected@write \@auxout {}{\string\newlabel {#1}{{#2}{\thepage}{#2}{#1}{}}}%
   \hypertarget{#1}{#2\hspace{-0.14cm}}
}
\makeatother

\textheight = 23 cm 
\textwidth = 15 cm 
\footskip = 1 cm
\topmargin = 0 cm 
\headheight = 0 cm 
\headsep =0 cm 
\oddsidemargin= 0.7 cm 
\evensidemargin = 0.7 cm 
\marginparwidth =  3cm 
\marginparsep = 0 cm \topskip = 0 cm

\newtheorem*{acknowledgement}{Acknowledgement}

\theoremstyle{definition}
\newtheorem{definition}{Definition}[section]

\theoremstyle{remark}
\newtheorem{remark}[definition]{Remark}

\theoremstyle{plain}

\newtheorem{lemma}[definition]{Lemma}
\newtheorem{proposition}[definition]{Proposition}
\newtheorem{theorem}[definition]{Theorem}
\newtheorem{assumption}[definition]{Assumption}

\author{Lukas Anzeletti\footnote{Universit\'e Paris-Saclay, CentraleSup\'elec, MICS and CNRS FR-3487;   \texttt{lukas.anzeletti@centralesupelec.fr}. Acknowledging the support of the Labex de Math\'ematiques Hadamard. The author also thanks UNICAMP and FAPESP for the financial support to participate at SPSAS2022.}
}

\title{ \Large{\textbf{A note on weak existence for SDEs driven by fractional Brownian motion}}}
\begin{document}

\maketitle

\begin{abstract}
We are interested in existence of solutions to the $d$-dimensional equation
\begin{equation*}
X_t=x_0+\int_0^t b(X_s)ds + B_t,
\end{equation*}
where $B$ is a (fractional) Brownian motion with Hurst parameter $H\leqslant 1/2$ and $b$ is 
an $\mathbb{R}^d$-valued \red{measure} in some Besov space. We exhibit a class of drifts $b$ such that weak existence holds. In particular existence of a 
weak solution is shown for $b$ being a finite $\mathbb{R}^d$-valued measure for any 
$H<1/(2d)$. 
\end{abstract}

\noindent\textit{\textbf{Keywords and phrases:}  Regularization by noise, fractional Brownian motion.} 

\medskip

\noindent\textbf{MSC2020 subject classification: } 60H50, 60H10, 60G22, 34A06.

\setcounter{tocdepth}{2}
\renewcommand\contentsname{}

\section{Introduction}

Throughout the paper we consider the $d$-dimensional stochastic differential equation (SDE)
\begin{align}
X_t&=x_0 + \int_0^t b(X_s)ds + B_t,\label{eq:skew}
\end{align}
where $x_0 \in \mathbb{R}^d$, $B$ is a fractional Brownian motion with Hurst parameter $H\leqslant 1/2$ and $b$ is \red{an $\mathbb{R}^d$-valued measure (i.e. each component is a measure)}. For the moment \eqref{eq:skew} is to be interpreted formally. 

In the Brownian case (i.e. $H=1/2$), there is an extensive literature for 
SDEs with irregular drift which we will 
not describe thoroughly. However, we point to the early work of Veretennikov 
\cite{Veretennikov} for bounded measurable drifts and the more general $L^p-L^q$ criterion 
of Krylov and R\"ockner \cite{KrylovRockner} for which the authors proved strong existence and pathwise uniqueness (both works allowing for time-dependent drift). For the case of possibly distributional drifts see \cite{FlandoliRussoWolf, BassChen, DelarueDiel,FIR}. 
\red{We also point out the work
\cite{HarShepp} and extensions thereof in \cite{LeGall} on a $1$-dimensional SDE }involving the local time at $0$ of the solution. This formally corresponds to a drift 
$b=a\delta_{0}$, where $a\in [-1,1]$ and $\delta_{0}$ is the Dirac distribution. The solution to such an equation is the so-called skew Brownian motion, see \cite{Lejay} for more 
details and various constructions. The case 
$a=1$ corresponds to reflection above $0$. \red{Additionally, it was shown in \cite{HarShepp,LeGall} that for $a$ with $|a|>1$, existence of solutions is lost.}

The above can also be considered as a special case of a second class of interesting problems, that is solving Equation \eqref{eq:skew} 
for a distribution $b$ and a fractional Brownian motion $B$ with sufficiently small Hurst 
parameter $H$. A first attempt in this direction is due to Nualart and Ouknine 
\cite{NualartOuknine}, who proved existence and uniqueness for a class of non-Lipschitz drifts.
\red{For $b=a\delta_{0}$ with $a\in \R$, Catellier and Gubinelli \cite{CatellierGubinelli} established the well-posedness of this equation for $H<1/(2d+2)$ (as a special case of a result in general Besov spaces) using nonlinear Young integrals.} \red{In \cite{AnRiTa} for $d=1$ and generalized to any dimension $d$ in \cite{Haressetal}, the condition for well-posedness was extended to $H\leqslant 1/(2d+2)$ and existence of a weak solution was shown to hold for $H<1/(2d+1)$ using the stochastic sewing lemma. Additionally, by \cite{AnRiTa}, there exists a weak solution for $H<\sqrt{2}-1$ in $1$-dimension, using nonlinear Young integrals in $p$-variation. However, even for $d=1$, both for existence and uniqueness we observe a gap between the Brownian case ($H=1/2$), with well-posedness for $|a|\leqslant 1$ proven in 
\cite{LeGall}, and the result for fractional Brownian motion with $H<\sqrt{2}-1$ (existence) and $H\leqslant 1/4$ (uniqueness).} The aim of this paper is to \red{close this gap in dimension $1$ in terms of weak existence}. For a more detailed summary of the literature for \eqref{eq:skew} in the fractional Brownian motion case as well as references to related works we refer the reader to Appendix~\ref{summary}.

During the preparation of this manuscript, the authors in \cite{Leetal} closed the gap mentioned above as a special case of a result for any dimension $d\geqslant 1$ (see \cite[Theorem 2.11]{Leetal}). Therein the authors prove existence of a weak solution to \eqref{eq:skew} for $b$ being a finite measure and $H<\frac{1}{d+1}$. \red{This seems to be the complete subcritical regime in any dimension \textendash{} for a scaling argument see \cite[Example 1.1]{GaleatiGerencser} and for a counterexample with a drift given by a singular function see \cite[Theorem 2.7]{Leetal}. In particular their result implies the following statement that can be deduced from Theorem~\ref{thm:weakexistence}.}

\begin{theorem}\label{cor:measure}
Let $b$ be a finite $\mathbb{R}^d$-valued measure and $H<1/(2d)$. Then there exists a weak solution to \eqref{eq:skew}.
\end{theorem}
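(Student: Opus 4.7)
The plan is to view the finite $\mathbb{R}^d$-valued measure $b$ as a distribution in a Besov--H\"older class of negative regularity, and to deduce the statement from the general weak existence result Theorem~\ref{thm:weakexistence}, as the paper announces just above Theorem~\ref{cor:measure}.

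First I would recall that a finite (signed) Borel measure $\mu$ on $\mathbb{R}^d$ belongs to $B^{0}_{1,\infty}(\mathbb{R}^d)$ and hence, by the standard Besov embedding $B^{0}_{1,\infty}\hookrightarrow B^{-d}_{\infty,\infty}$, to the H\"older--Besov space $\mathcal{C}^{-d}(\mathbb{R}^d)$. Concretely, if $(\Delta_j)_{j\geqslant -1}$ is a Littlewood--Paley partition of unity, then $\|\Delta_j \mu\|_{L^\infty}\lesssim 2^{jd}\,\|\mu\|_{TV}$ uniformly in $j$, which is exactly the $\mathcal{C}^{-d}$ bound. Each component of $b$ is therefore a nonnegative distribution of regularity $-d$, consistent with the standing componentwise nonnegativity assumption of the paper.

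Next I would check that the index $\beta=-d$ falls into the regime covered by Theorem~\ref{thm:weakexistence}. The natural Catellier--Gubinelli scaling threshold for the nonlinear Young integral $\int_0^t b(X_s)\,ds$ against a fractional Brownian motion of Hurst parameter $H$ to be well defined is $\beta+\tfrac{1}{2H}>0$, which, for $\beta=-d$, is precisely the condition $H<1/(2d)$. Thus Theorem~\ref{thm:weakexistence}, specialised to $b\in\mathcal{C}^{-d}$ nonnegative, is expected to cover exactly this range of Hurst parameters, yielding the existence of a weak solution of \eqref{eq:skew} and therefore the stated result.

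The real difficulty is not in the corollary itself but in Theorem~\ref{thm:weakexistence}: constructing a suitable nonlinear Young integral against a merely nonnegative distribution of negative Besov regularity, establishing tightness of a regularised sequence $(X^n)$ driven by smooth approximations $b^n\to b$, and identifying the limit as a weak solution. For the present statement, the only additional point requiring attention is that the smoothing $b^n$ can be chosen (for instance by convolution with a nonnegative mollifier) so as to preserve componentwise nonnegativity and to give a sequence uniformly bounded in $\mathcal{C}^{-d}$, so that the general theorem applies without further work.
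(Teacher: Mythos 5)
Your argument is exactly the paper's: the measure is placed in $\mathcal{B}^0_1$ and embedded into $\mathcal{B}^{-d}_\infty$ (this is Remark~\ref{rem:measure}), and then Theorem~\ref{thm:weakexistence} applies since $\beta=-d>-1/(2H)$ is equivalent to $H<1/(2d)$. This matches the paper's one-line deduction, so nothing further is needed.
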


Nevertheless, due to the simplicity of the proof, we state and prove 
Theorem~\ref{thm:weakexistence}. The general idea therein is to make use of the 
nonnegativity of the drift, giving certain \emph{a priori} estimates of the solution under loose 
assumptions on the regularity of the drift, respectively the Hurst parameter 
$H$, which is eventually leading to existence of a weak solution. The idea of the proof of 
Theorem~\ref{thm:weakexistence}, that resembles the ones in \cite{Atetal} and \cite{AnRiTa}, heavily relies on the stochastic sewing Lemma with random controls.

\section{Notations and definitions}\label{defnot}

Throughout the paper, we use the following notations and conventions:
\begin{compactitem}
	\item Constants $C$ might vary from line to line. 
\item
For topological spaces $X,Y$ we denote the set of continuous function  
from $X$ to $Y$ by $\mathcal{C}(X,Y)$.
\item For $x=(x_1,\cdots,x_d) \in \mathbb{R}^d$, let $|x|=\sum_{i=1}^d |x_i|$.
\item 
Let \(s < t\) be two real numbers and \(\Pi = (s = t_0 < t_1 < \cdots < t_n =t)\) be a 
partition of \([s,t]\), we denote $|\Pi| = \sup_{i=1,\cdots,n}(t_{i} - t_{i-1})$ the mesh 
of $\Pi$. 
\item For $s,t \in \mathbb{R}$ with $s\leqslant t$, we denote $\Delta_{[s,t]}:=\{(u,v):s\leqslant u \leqslant v \leqslant t\}$.
\item For any function $f$ defined on $[s,t]$, we denote $f_{u,v}:=f_v-f_u$ for $(u,v) \in \Delta_{[s,t]}$. 
\item For any function \(g\) defined on $\Delta_{[s,t]}$ and 
$s\leqslant r\leqslant u \leqslant v\leqslant t$, we denote $\delta 
g_{r,u,v}:=g_{r,v}-g_{r,u}-g_{u,v}$.
\item 
For a probability space $\Omega$ and $p \in [1,\infty]$, the norm on $L^p(\Omega)$ is denoted by $\|\cdot\|_{L^p}$.
\item 
We denote by $(B_t)_{t \geqslant 0}$ a fractional Brownian motion 
with  Hurst parameter $H\leqslant 1/2$. 
\item 
\red{Filtrations \((\mathcal{F}_t)_{t \geqslant 
	0}\) will be denoted by \(\mathbb{F}\).}
\item All filtrations are assumed to fulfill  the usual conditions. 
\item 
\red{For a filtration $\mathbb{F}$, we call $(W_t)_{t \geqslant 
	0}$ a $\mathbb{F}$-Brownian 
motion if $(W_t)_{t \geqslant 
	0}$ is a Brownian motion adapted to $\mathbb{F}$ and $W_t-W_s$ is independent of $\mathcal{F}_s$ for $0 \leqslant s \leqslant t$. For such filtration, the conditional expectation $\EE[\cdot \mid \mathcal{F}_s]$ is denoted by 
$\EE^s[\cdot]$.}
\end{compactitem}

For any $t>0$ and $x \in \mathbb{R}^d$, let $g_t(x):=\tfrac{1}{(2\pi 
t)^{d/2}}e^{-\tfrac{|x|^2}{2t}}$. For $\phi:\mathbb{R}^d\rightarrow \mathbb{R}^d$,
let
\begin{align} \label{Gaussiansemigroup}
    G_t\phi(x):=(g_t * \phi) (x).
\end{align}

A continuous function $w:\Delta_{[s,t]}\rightarrow [0,\infty)$ is a control function if, for 
$s\leqslant r \leqslant u \leqslant v \leqslant t$,
\begin{align*}
    w(r,u)+w(u,v) \leqslant w(r,v),
\end{align*}
and $w(r,r)=0$ for all $r \in [s,t]$. 

We call a measurable function $\lambda \colon \Delta_{[s,t]}\times \Omega\rightarrow \mathbb{R}_{+}$ a random control if there exists a set $\Omega^\prime$ of full measure such that for $\omega \in \Omega^\prime$, $\lambda(\cdot,\omega)$ is a control.

We denote the nonhomogeneous Besov spaces by $\mathcal{B}^s_p$. \red{For a precise definition of these spaces see Section~\ref{sec:Besov}}. For $s \in \mathbb{R}_{+}\setminus \mathbb{N}$ and $p=\infty$, Besov spaces 
coincide with H\"older spaces (see \cite[p.99]{BaDaCh}).

\begin{definition} \label{def:beta-}
Let $\beta \in \mathbb{R}$. We say that $(f_n)_{n \in \mathbb{N}}$ converges to $f$ in $\mathcal{B}_\infty^{\beta-}$ if $\sup_{n \in \mathbb{N}} \|f_n\|_{\mathcal{B}_\infty^\beta}<\infty$ and 
\begin{align*}
\forall \beta^\prime<\beta, \quad    \lim_{n \rightarrow 
\infty}\|f_n-f\|_{\mathcal{B}_\infty^{\beta^\prime}}=0.
\end{align*}
\end{definition}

\begin{remark} \label{rem:measure}
Throughout the paper we make use of the nonnegativity of the drift and therefore work with nonnegative distributions. However any such distribution is actually given by a \blue{Radon measure (see \cite[Exercise 22.5]{Treves}) and therefore we work directly with such measures.}
\end{remark}

\paragraph{\emph{Stochastic sewing.}}
Stochastic sewing was originally introduced in \cite{Le}. In Lemma~\ref{lem:stsrandomcontrol} we recall a recent extension (see \cite[Theorem 4.7]{Atetal}) involving random controls. This version of stochastic sewing allows us to make use of the nonnegativity of the drift as the corresponding integral will have nonnegative increments in each component. In particular it will give rise to a random control.

\begin{lemma} \label{lem:stsrandomcontrol}
Let $m \in [2,\infty)$ and $0\leqslant S<T$. Let $A: \Delta_{[S,T]} \rightarrow L^m$ be $\mathbb{R}^d$-valued such that $A_{s,t}$ is $\mathcal{F}_t$-measurable for any $(s,t) \in \Delta_{[S,T]}$. Let $\lambda$ be a random control. Assume that there exist constants $\Gamma_1,\Gamma_2,\alpha_1,\beta_1\geqslant 0$ and $\varepsilon>0$ such that $\alpha_1+\beta_1>1$ and
\begin{align}
|\EE^u\delta A_{s,u,t}|&\leqslant \Gamma_1|t-s|^{\alpha_1}\lambda(s,t)^{\beta_1} \text{ a.s.,}\label{sts1}\\
    \|\delta A_{s,u,t}\|_{L^m} &\leqslant \Gamma_2 (t-s)^{1/2+\varepsilon}, \label{sts2}
\end{align}
for every $(s,t) \in \Delta_{[S,T]}$ and $u \coloneqq (s+t)/2$. 
Assume there exists a process $(\mathcal{A}_t)_{t\in [S,T]}$ such that, for any $t \in [S,T]$ 
and any sequence of partitions $\Pi_k=\{t_i^k\}_{i=0}^{N_k}$ of $[S,t]$ with mesh size going 
to zero, we have
\begin{align} \label{sts3}
    \mathcal{A}_t=\lim_{k\rightarrow \infty}\sum_{i=0}^{N_k}A_{t_i^k,t_{i+1}^k} \text{ in probability.}
\end{align}
Then there exists a map $D \colon \Delta_{[S,T]} \rightarrow L^m$ and a constant $C>0$, such that for all $(s,t) \in \Delta_{[S,T]}$,
\begin{align*}
|\mathcal{A}_t - \mathcal{A}_s -A_{s,t}|\leqslant C \Gamma_1 |t-s|^{\alpha_1} \lambda(s,t)^{\beta_1}+D_{s,t} \text{ a.s.}
\end{align*}
and 
\begin{align*}
\|D_{s,t}\|_{L^m}\leqslant C \Gamma_2 |t-s|^{1/2+\varepsilon}.
\end{align*}

\end{lemma}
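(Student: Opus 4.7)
The plan is to adapt L\^e's original dyadic stochastic sewing argument to the setting of a random control $\lambda$, splitting the sewing defect into a pathwise piece absorbed by $\lambda(s,t)^{\beta_1}$ and a residual piece $D_{s,t}$ controlled only in $L^m$. Fix $(s,t)\in\Delta_{[S,T]}$ and, for $k\geqslant 0$, set $t_i^k:=s+i(t-s)2^{-k}$ and $A_{s,t}^k:=\sum_{i=0}^{2^k-1}A_{t_i^k,t_{i+1}^k}$. Hypothesis \eqref{sts3} identifies $\mathcal{A}_t-\mathcal{A}_s$ with $\lim_{k\to\infty}A_{s,t}^k$ in probability, so it is enough to control the finite telescoping sum
\begin{equation*}
A_{s,t}^K-A_{s,t}\;=\;-\sum_{k=0}^{K-1}\sum_{i=0}^{2^k-1}\delta A_{t_{2i}^{k+1},t_{2i+1}^{k+1},t_{2i+2}^{k+1}}
\end{equation*}
uniformly in $K$ and then pass to the limit.

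Next, I would decompose each nested increment as $\delta A_{r,u,v}=(\delta A_{r,u,v}-\EE^u\delta A_{r,u,v})+\EE^u\delta A_{r,u,v}$. For the conditional-expectation summands, assumption \eqref{sts1} combined with super-additivity of $\lambda$ and an elementary H\"older-type inequality exploiting $\alpha_1+\beta_1>1$ yields, at each level $k$, an a.s.\ bound of the form
\begin{equation*}
\sum_{i=0}^{2^k-1}\bigl|\EE^{t_{2i+1}^{k+1}}\delta A_{t_{2i}^{k+1},t_{2i+1}^{k+1},t_{2i+2}^{k+1}}\bigr|\;\leqslant\;C\,\Gamma_1|t-s|^{\alpha_1}\lambda(s,t)^{\beta_1}\,2^{-k\eta}
\end{equation*}
for some $\eta>0$, whose geometric sum in $k$ produces the desired pathwise contribution $C\Gamma_1|t-s|^{\alpha_1}\lambda(s,t)^{\beta_1}$. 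For the centred summands $M_i^k:=\delta A_i-\EE^{t_{2i+1}^{k+1}}\delta A_i$, the tower property gives $\EE^{t_{2i}^{k+1}}M_i^k=0$, so $(M_i^k)_i$ is a martingale difference sequence along $(\mathcal{F}_{t_i^{k+1}})_i$; the Burkholder--Davis--Gundy inequality combined with \eqref{sts2} then yields
\begin{equation*}
\Bigl\|\sum_{i=0}^{2^k-1}M_i^k\Bigr\|_{L^m}\;\leqslant\;C\,\Gamma_2|t-s|^{1/2+\varepsilon}\,2^{-k\varepsilon}.
\end{equation*}
Defining $D_{s,t}$ as the $L^m$-convergent sum of these martingale contributions gives the claimed $L^m$ bound, and combining with the pathwise estimate and passing to the limit $K\to\infty$ using \eqref{sts3} yields the asserted decomposition.

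The step I expect to require the most care is the pathwise drift estimate: since $\lambda$ is random one cannot take $L^m$ norms until the very end, so the gain from $\alpha_1+\beta_1>1$ must be extracted purely through super-additivity and elementary exponent inequalities, with distinct treatments for $\beta_1\leqslant 1$ (where Jensen's inequality applies to $\sum\lambda_i^{\beta_1}$) and $\beta_1\geqslant 1$ (where one bounds $\sum\lambda_i^{\beta_1}\leqslant\lambda(s,t)^{\beta_1}$ directly and relies on the factor $2^{-k\alpha_1}$ from the dyadic intervals). Ensuring that no $\lambda$-dependence leaks into $D_{s,t}$ while keeping the overall bound uniform in $K$ is the technically delicate core of the argument.
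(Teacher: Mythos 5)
The paper does not actually prove Lemma~\ref{lem:stsrandomcontrol}; it imports it verbatim from \cite[Theorem 4.7]{Atetal}, so there is no in-paper argument to compare against. Your outline is the standard dyadic stochastic-sewing proof adapted to a random control --- telescope over dyadic refinements, split each $\delta A$ into its conditional expectation at the midpoint (handled pathwise via \eqref{sts1} and super-additivity of $\lambda$) and a centred remainder (handled in $L^m$ via Burkholder--Davis--Gundy and \eqref{sts2}) --- and this is indeed how such statements are proved in \cite{Le} and \cite{Atetal}. The martingale half is correct as written (note only that $(M_i^k)_i$ is a difference sequence for the level-$k$ filtration $(\mathcal{F}_{t_i^k})_i$, and that \eqref{sts1}--\eqref{sts2} being stated only at midpoints is exactly what the dyadic construction needs); the identification of $\mathcal{A}_t-\mathcal{A}_s$ with the limit of the dyadic sums over $[s,t]$ follows from \eqref{sts3} by inserting $s$ as a partition point of $[S,t]$, and an a.s.\ subsequence extraction turns the convergence in probability into the claimed a.s.\ inequality.

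The one place your sketch does not cover the statement in its full generality is the drift estimate when $\alpha_1=0$ and $\beta_1>1$, which the hypotheses $\alpha_1,\beta_1\geqslant 0$, $\alpha_1+\beta_1>1$ permit. In that regime super-additivity only gives $\sum_i\lambda(t_{2i}^{k+1},t_{2i+2}^{k+1})^{\beta_1}\leqslant\lambda(s,t)^{\beta_1}$ with no gain in $k$, since a control may concentrate essentially all of its mass on a single dyadic subinterval; the geometric factor $2^{-k\eta}$ you rely on then disappears and the sum over $k$ of your bounds diverges. A purely dyadic argument cannot close this case, and one would need a Young-type point-removal argument (or an extra hypothesis) instead. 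This corner is irrelevant for the application in the paper, where $\beta_1=1$ and $\alpha_1=H(\beta-1)+1>0$, and your treatment of the cases $\beta_1\leqslant 1$ (via concavity, yielding $2^{-k(\alpha_1+\beta_1-1)}$) and $\beta_1\geqslant 1$ with $\alpha_1>0$ is correct; but as a proof of the lemma as stated it leaves this gap.
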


\paragraph{\emph{Link between Bm and fBm.}} For $H\in (0,\tfrac{1}{2})$, there exist \red{linear} operators $\mathcal{A}$ and 
$\bar{\mathcal{A}}$ \red{which are formally inverse to one another and can be defined on suitable spaces}. They can both be given in terms of 
fractional integrals and derivatives (see \cite[Th. 11]{Picard}), such that
\begin{align}
\text{if $B$ is a fractional Brownian motion, } W=\mathcal{A}(B) \text{ is a Brownian motion}, 
\label{operatortildeA}\\
\text{if $W$ is a Brownian motion, } B=\bar{\mathcal{A}}(W) \text{ is a fractional Brownian 
motion}. \label{operatorA}
\end{align}
Furthermore $B$ and $W$ generate the same filtration. \red{For yet another representation of such operators see \cite[Equation (7) and (12)]{NualartOuknine}.} 
\begin{lemma}\cite[Lemma B.1]{AnRiTa} \label{lem:operatorcontinuity}
The operator $\mathcal{A}$ continuously maps 
$(\mathcal{C}_{[0,T]},\|\cdot\|_\infty)$ to itself.
\end{lemma}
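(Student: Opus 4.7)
The plan is to exploit the explicit pathwise formula for $\mathcal{A}$ recalled from Picard \cite[Th.~11]{Picard}. For $H\in(0,1/2)$, that representation writes $\mathcal{A}(f)$ in terms of a Volterra-type operation on $f$: after one integration by parts, all increments $dB$ disappear and $\mathcal{A}(f)(t)$ is expressed as a boundary term proportional to $f(t)$ (and $f(0)$, which vanishes since our paths start at $0$) plus a genuine Riemann integral of $f$ against a kernel $\partial_s K^{-1}_H(t,s)$ that is absolutely integrable in $s\in(0,t)$ for every fixed $t$. The upshot is that $\mathcal{A}$ becomes an explicit linear operator on continuous paths, and continuity on $(\mathcal{C}_{[0,T]},\|\cdot\|_\infty)$ reduces to a bound $\|\mathcal{A}(f)\|_\infty \leqslant C\|f\|_\infty$ with $C=C(H,T)$.

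The first step is to write down, from Picard's formula, the decomposition
\begin{equation*}
\mathcal{A}(f)(t)=a_H(t)\,f(t)-\int_0^t \kappa_H(t,s)\,f(s)\,ds,
\end{equation*}
where $a_H(t)$ is an explicit polynomial in $t$ and $\kappa_H(t,s)=\partial_s K^{-1}_H(t,s)$ behaves, near the diagonal and the origin, like an integrable power of $t-s$ and $s$ (typical exponents of order $1/2-H-1$ and $1/2-H-1$ respectively, but integrable because they are not combined). The second step is to verify the uniform bound
\begin{equation*}
\sup_{t\in[0,T]} |a_H(t)| + \sup_{t\in[0,T]} \int_0^t |\kappa_H(t,s)|\,ds \;\leqslant\; C(H,T)<\infty,
\end{equation*}
which is a direct computation using the explicit form of the Molchan--Golosov inverse kernel. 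Together these give $|\mathcal{A}(f)(t)|\leqslant C(H,T)\,\|f\|_\infty$ for every $t\in[0,T]$.

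The third step is to check that $t\mapsto \mathcal{A}(f)(t)$ is continuous when $f$ is. The boundary term $a_H(t)f(t)$ is continuous as a product of continuous functions; for the integral term, continuity in $t$ follows from dominated convergence using the integrable majorant derived above together with the pointwise continuity of $t\mapsto \kappa_H(t,s)$ off the diagonal. Hence $\mathcal{A}$ maps $\mathcal{C}_{[0,T]}$ into itself, and since it is linear, the bound $\|\mathcal{A}(f)\|_\infty \leqslant C(H,T)\|f\|_\infty$ is exactly continuity.

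The main obstacle is that the natural Volterra representation $W_t=\int_0^t K^{-1}_H(t,s)\,dB_s$ involves a formal $dB$, which is not defined for an arbitrary continuous path; the remedy is to justify the pathwise integration by parts rigorously (approximating $f$ by smooth functions, applying the formula there, and using the kernel integrability estimates to pass to the limit uniformly in $t$). Once this is done, the linear-bound argument above closes the proof. Since this is exactly \cite[Lemma~7.2]{AnRiTa}, a shorter alternative is simply to quote that statement, but the sketch above indicates what makes the result work.
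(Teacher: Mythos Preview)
The paper does not give its own proof of this lemma; it simply cites \cite[Lemma~7.2]{AnRiTa}. Your sketch is a reasonable outline of how that cited proof proceeds (integration by parts in the Volterra representation to remove the $dB$, followed by an $L^1$ bound on the differentiated inverse Molchan--Golosov kernel), and you yourself note at the end that quoting the reference suffices. Two small caveats: the boundary coefficient you call $a_H(t)$ is not a polynomial in $t$ but a power $c_H\,t^{1/2-H}$ (coming from $K_H^{-1}(t,t-)$), and the phrase ``exponents of order $1/2-H-1$ \dots\ but integrable because they are not combined'' is imprecise---in the actual computation one does get a product of powers of $s$ and $(t-s)$, and integrability comes from each exponent being strictly greater than $-1$ since $H<1/2$. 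Neither point affects the validity of the strategy.
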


\begin{definition}
	Let  $\mathbb{G}$ be a filtration. We say that $B$ is a $\mathbb{G}$-fractional Brownian 
	motion if 
$W=\mathcal{A}(B)$ is a $\mathbb{G}$-Brownian motion.
\end{definition}

\paragraph{\emph{Definition of a solution.}}

As the drift in \eqref{eq:skew} is distributional, it is \emph{a priori} not clear how to define a solution. In the literature this is either done as below in Definition~\ref{def:solution} (see \cite{Atetal,AnRiTa,GaleatiGerencser}) or via nonlinear Young integrals (see Appendix~\ref{summary}). The two definitions coincide as long as the solution fulfills a certain regularity (see \cite[Remark 8.5]{GaleatiGerencser} and \cite[Theorem 2.15]{AnRiTa}).

\begin{definition} \label{def:solution}
Let $\beta \in \mathbb{R}$, $b \in \mathcal{B}_\infty^\beta$, $T>0$ and $x_0 \in \mathbb{R}^d$. We call a couple $((X_t)_{t \in [0,T]},(B_t)_{t \in 
[0,T]})$ defined on some filtered probability space  
$(\Omega,\mathcal{G},\mathbb{G},\mathbb{P})$ a \emph{weak solution} to \eqref{eq:skew} on 
$[0,T]$, with 
initial condition $x_0$, if 
\begin{compactitem}
\item $B$ is a $\mathbb{G}$-fBm;

\item $X$ is adapted to $\mathbb{G}$;

\item there exists a process $(K_t)_{t 
\in [0,T]}$ such that, a.s.,
\begin{equation}\label{solution1}
X_t=x_0+K_t+B_t\text{ for all } t \in [0,T] ;
\end{equation}

\item for every sequence $(b^n)_{n\in \mathbb{N}}$ of smooth bounded functions converging to $b$ in $\mathcal{B}^{\beta-}_\infty$, we have that
\begin{equation}\label{approximation2}
       \sup_{t\in [0,T]}\left|\int_0^t b^n(X_r) dr 
       -K_t\right|\limbashaut{\longrightarrow}{n\rightarrow \infty}{\mathbb{P}} 0.
\end{equation}
\end{compactitem}
If the couple is clear from the context, we simply say that  $(X_t)_{t \in [0,T]}$ is a weak 
solution. If $X$ is adapted to the filtration generated by $B$, we call it a \emph{strong solution}.
\end{definition}

\section{Main result and proof} \label{weaksolution}
\red{Throughout this section we work on a fixed time interval $[0,T]$ for some deterministic $T>0$ and consider solutions defined thereon.} Moreover we work under the following assumption:

\begin{assumption} \label{assumption}
\red{Consider $(b,\beta)$ such that $b$ is a \red{measure} with
\begin{equation}
b \in \mathcal{B}^\beta_\infty \text{ for }\beta \in \mathbb{R} \text{ with } \beta>-\frac{1}{2H}.
\end{equation}}
\end{assumption}

Our main result reads as follows:

\begin{theorem} \label{thm:weakexistence}
Let $(b,\beta)$ satisfy Assumption~\ref{assumption}. Then there exists a weak solution to Equation \eqref{eq:skew} \red{in the sense of Definition~\ref{def:solution}}.
\end{theorem}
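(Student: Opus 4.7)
The approach is classical in this setting: regularize the drift, solve the mollified equation by standard theory, prove uniform-in-$n$ regularity of the drift integral using the nonnegativity-based random control together with the stochastic sewing Lemma~\ref{lem:stsrandomcontrol}, and finally extract a limit by tightness and the Skorokhod representation theorem. First, I would fix a nonnegative mollifier $(\rho_n)_{n \in \mathbb{N}}$ and set $b^n := b * \rho_n$, which is smooth, componentwise nonnegative, and converges to $b$ in $\mathcal{B}^{\beta-}_\infty$ (Definition~\ref{def:beta-}). For each $n$, classical SDE theory yields a unique strong solution $X^n_t = x_0 + K^n_t + B_t$ with $K^n_t := \int_0^t b^n(X^n_s)\,ds$. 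Because each component of $b^n$ is nonnegative, each component of $K^n$ is nondecreasing, so $\lambda_n(s,t) := |K^n_t - K^n_s|$ is superadditive and continuous, hence a random control.

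\textbf{A priori estimate.} Apply Lemma~\ref{lem:stsrandomcontrol} to $A^n_{s,t} := \EE^s\!\int_s^t b^n(X^n_r)\,dr$, whose Riemann sums converge to $K^n_t$ in probability (the martingale $\sum_i (K^n_{t_{i+1}}-K^n_{t_i} - \EE^{t_i}[K^n_{t_{i+1}}-K^n_{t_i}])$ vanishes in $L^2$ because $K^n$ is absolutely continuous for smooth $b^n$, hence has zero quadratic variation). The key ingredients for the two hypotheses of the lemma are:
\begin{itemize}
\item The fractional Gaussian smoothing: conditionally on $\mathcal{F}_s$, $B_r - B_s$ is Gaussian with variance comparable to $(r-s)^{2H}$, so $\|\EE^s b^n(x + B_r - B_s)\|_\infty \lesssim (r-s)^{H\beta}\|b^n\|_{\mathcal{B}^\beta_\infty}$ via \eqref{Gaussiansemigroup}. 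Under Assumption~\ref{assumption} we have $1 + H\beta > 1/2$, which yields the $L^m$-estimate $\|\delta A^n_{s,u,t}\|_{L^m}\lesssim(t-s)^{1/2+\varepsilon}$ required by \eqref{sts2}.
\item For \eqref{sts1}, decompose $b^n(X^n_r) = b^n(X^n_u + (K^n_r - K^n_u) + (B_r - B_u))$ and isolate the drift shift: the dependence on $K^n_r - K^n_u$ is controlled pointwise by $\lambda_n(u,t)$, which feeds in as the random-control factor. A direct use of Gaussian smoothing then gives $|\EE^u \delta A^n_{s,u,t}| \lesssim (t-s)^{\alpha_1}\lambda_n(s,t)^{\beta_1}$ with $\alpha_1+\beta_1 > 1$ and $\beta_1 < 1$.
\end{itemize}
The sewing conclusion combined with $\lambda_n(s,t) = |K^n_t - K^n_s|$ and the fact that $\beta_1 < 1$ allows us to absorb the random-control term (by a standard buckling argument on a short enough interval and iterating), producing the uniform estimate $\|K^n_t - K^n_s\|_{L^m}\leqslant C|t-s|^{1/2+\varepsilon'}$ for some $\varepsilon'>0$ independent of $n$.

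\textbf{Passage to the limit.} By Kolmogorov's criterion the family $\{(X^n,B)\}_n$ is tight in $\mathcal{C}([0,T],\mathbb{R}^d)^2$. Invoking the Skorokhod representation theorem I would pass to a probability space carrying a.s. convergent copies $(\tilde X^n, \tilde B^n) \to (\tilde X, \tilde B)$. The a priori bound passes to the limit by Fatou, so $\tilde K_t := \tilde X_t - x_0 - \tilde B_t$ is $\gamma$-Hölder continuous for some $\gamma > 1/2$. To check that $\tilde B$ is a fractional Brownian motion with respect to a suitable filtration $\mathbb{G}$ (e.g.\ the one generated by $(\tilde X,\tilde B)$), apply $\mathcal{A}$ to $\tilde B$ (Lemma~\ref{lem:operatorcontinuity}) and verify Brownian increments and independence via weak convergence. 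Finally, to identify $\tilde K_t$ with $\int_0^t b(\tilde X_s)\,ds$ in the sense of Definition~\ref{def:solution}, for any sequence $\tilde b^n \to b$ in $\mathcal{B}^{\beta-}_\infty$ we apply the same sewing scheme to $\int_0^t \tilde b^n(\tilde X_s)\,ds - \int_0^t \tilde b^m(\tilde X_s)\,ds$, using $\tilde b^n - \tilde b^m \to 0$ in $\mathcal{B}^{\beta'}_\infty$ for any $\beta' < \beta$ and the Hölder regularity of $\tilde X$; this shows Cauchyness in probability and matches the limit with $\tilde K_t$ along the chosen mollification.

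\textbf{Main obstacle.} The crucial step is verifying hypothesis \eqref{sts1} with the correct balance $\alpha_1 + \beta_1 > 1$, since it requires simultaneously using the Gaussian smoothing of the fBm and absorbing the drift contribution into the random control while keeping $\beta_1 < 1$ so that the buckling step closes. The threshold $\beta > -1/(2H)$ is exactly what makes this arithmetic of exponents work, and the nonnegativity of $b$ is essential because without it the candidate $\lambda_n$ would not be a control.
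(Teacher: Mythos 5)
Your overall architecture---mollify the drift, use nonnegativity of $b^n$ to turn $(s,t)\mapsto|K^n_t-K^n_s|$ into a random control, apply Lemma~\ref{lem:stsrandomcontrol} to get a uniform \emph{a priori} bound of order $(t-s)^{1+H\beta}$ with $1+H\beta>1/2$, then tightness, Skorokhod representation, a stability step, and identification of the filtration via the operators $\mathcal{A},\bar{\mathcal{A}}$---is exactly the paper's strategy. The gap is in the execution of the sewing step, which is the heart of the proof. You choose the conditional germ $A^n_{s,t}=\EE^s\int_s^t b^n(X^n_r)\,dr$ and claim that \eqref{sts2} follows from the Gaussian smoothing bound for $\EE^s b^n(x+B_r-B_s)$ with deterministic $x$. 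But $X^n_r=X^n_s+K^n_{s,r}+B_{s,r}$ and $K^n_{s,r}$ is \emph{not} $\mathcal{F}_s$-measurable, so that bound does not apply to $\EE^s b^n(X^n_r)$; freezing the drift at $s$ produces an error of order $\|b^n\|_{\mathcal{C}^1}\,|K^n_{s,r}|$, which is neither uniform in $n$ nor deterministic. Since \eqref{sts2} demands a deterministic constant $\Gamma_2$, the random control cannot be used to absorb this error there, and your verification of \eqref{sts2} does not close as written. The paper avoids the problem by taking the frozen germ $A_{s,t}=\int_s^t h(B_r+K_s)\,dr$: because $K_s$ is $\mathcal{F}_s$-measurable, Lemma~\ref{regulINT} with $\Xi=K_s$ gives $\|A_{s,t}\|_{L^m}\leqslant C\|h\|_{\mathcal{B}^\beta_\infty}(t-s)^{1+H\beta}$ directly (hence \eqref{sts2}), and the entire drift-freezing error is pushed into \eqref{sts1}, the only hypothesis where the random control is permitted.

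Two further points. Your insistence on $\beta_1<1$ is unsubstantiated and unnecessary: the natural estimate (heat-semigroup smoothing of $h$ into $\mathcal{C}^1$ via $\|G_{\sigma^2_{u,r}}h\|_{\mathcal{C}^1}\lesssim\|h\|_{\mathcal{B}^\beta_\infty}(\sigma^2_{u,r})^{(\beta-1)/2}$, then a mean-value bound in the shift $K_u-K_s$) yields exactly $\beta_1=1$ and $\alpha_1=1+H(\beta-1)>0$, which already satisfies $\alpha_1+\beta_1>1$ under Assumption~\ref{assumption}; the absorption needed for the analogue of Lemma~\ref{lem:apriori}\ref{a} closes because the prefactor $(t-s)^{\alpha_1}$ is small on short intervals, not because the power of the control is sublinear. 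Finally, the paper identifies the limit drift not by running the sewing scheme on the limit process $\tilde X$ directly (which would require re-verifying the sewing hypotheses for a process not yet known to be a solution) but via the three-term triangle inequality of Proposition~\ref{prop:stability}, applying Lemma~\ref{lem:apriori}\ref{b} with $h=b^n-\tilde b^k$ to the approximating \emph{strong} solutions $\tilde X^k$; your direct route could probably be made to work using the Fatou-limited bounds, but it needs more care than you give it.
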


Combining Remark~\ref{rem:finitemeasure} and Theorem~\ref{thm:weakexistence}, we get Theorem~\ref{cor:measure}.

The remainder of the section is dedicated to proving Theorem~\ref{thm:weakexistence}. This will be done by regularizing the drift, considering the sequence of strong solutions to the corresponding approximated equations and proceeding via a tightness-stability argument. In order to do so we state two \emph{a priori} estimates in Lemma~\ref{lem:apriori}, quantifying the regularization effect of any solution.
\red{First, we state Lemma~\ref{regulINT} since it captures the regularization effect of the fBm. }For a slightly more general statement and the proof see \cite[Lemma 3.4]{Haressetal}.

\begin{lemma} \label{regulINT}
Let  $\gamma \in (-1/(2H),0)$, $m \in [2,\infty)$ and 
$d,e \in \N$. There exists a constant $C>0$ such that for any $0\leqslant S\leqslant T$, 
any $\mathcal{F}_{S}$-measurable random variable $\Xi$ in $\R^e$ and any bounded 
measurable function $f:\mathbb{R}^d\times\R^e \rightarrow \mathbb{R}^d$ fulfilling
\begin{enumerate}[label=(\roman*)]
    \item $\EE\left[ \|f(\cdot,\Xi)\|_{\mathcal{C}^1}^2\right]<\infty$,
    \item $\EE\left[ \|f(\cdot,\Xi)\|_{\mathcal{B}_\infty^\gamma}^m\right]<\infty$,
\end{enumerate}
we have for any $t\in[S,T]$ that
\begin{equation}\label{eq:regulINT}
    \left\|\int_S^t f(B_r,\Xi) \, dr\right\|_{L^m} \leqslant C \, \| \|f(\cdot,\Xi)\|_{\mathcal{B}_\infty^\gamma}\|_{L^m}\, (t-S)^{1+H\gamma}.
\end{equation}
\end{lemma}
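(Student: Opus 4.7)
The plan is to apply the stochastic sewing lemma (Lemma~\ref{lem:stsrandomcontrol}) to the germ
\[
A_{s,t} := \mathbb{E}^s \!\int_s^t\! f(B_r,\Xi)\,dr,
\]
with candidate limit $\mathcal{A}_t := \int_S^t f(B_r,\Xi)\,dr$. Assumption~(i) guarantees the Riemann-sum convergence \eqref{sts3} when $f$ is smooth (by dominated convergence and continuity of $B$); after establishing \eqref{eq:regulINT} in that case, assumption~(ii) together with a mollification of $f$ in the spatial variable extends the bound to general $f$ via Fatou's lemma.

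The regularisation mechanism is the local non-determinism of fBm: for $S\leqslant s \leqslant r$, the conditional law $\mathcal{L}(B_r\mid\mathcal{F}_s)$ is Gaussian with an $\mathcal{F}_s$-measurable mean $\mu_{s,r}$ and variance $\sigma_{s,r}^2 \geqslant c(r-s)^{2H}$. Combining this with the Schauder-type bound $\|G_{\sigma^2}g\|_{\infty}\leqslant C\sigma^{\gamma}\|g\|_{\mathcal{B}_\infty^\gamma}$ (valid for $\gamma<0$) and freezing the $\mathcal{F}_S$-measurable variable $\Xi$, one obtains the pointwise estimate $|\mathbb{E}^s f(B_r,\Xi)| \leqslant C(r-s)^{H\gamma}\|f(\cdot,\Xi)\|_{\mathcal{B}_\infty^\gamma}$, hence
\[
\|A_{s,t}\|_{L^m} \leqslant C(t-s)^{1+H\gamma}\,\|\,\|f(\cdot,\Xi)\|_{\mathcal{B}_\infty^\gamma}\,\|_{L^m}.
\]
Since the assumption $\gamma>-1/(2H)$ is equivalent to $1+H\gamma>1/2$, the bound \eqref{sts2} on $\delta A_{s,u,t}$ follows immediately by the triangle inequality with $\varepsilon = 1/2 + H\gamma$.

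The delicate step is condition \eqref{sts1}. A direct computation gives $\delta A_{s,u,t} = (\mathbb{E}^s - \mathbb{E}^u)\int_u^t f(B_r,\Xi)\,dr$, which is $\mathcal{F}_u$-measurable, so $\mathbb{E}^u\delta A_{s,u,t}$ equals $\delta A_{s,u,t}$ itself; bounding the latter by the $A_{s,t}$-type estimate above only yields $(t-s)^{1+H\gamma}$, which may fail $\alpha_1+\beta_1>1$. To extract the missing $(u-s)^{H\alpha}$ factor I would write $\mathbb{E}^u f(B_r,\Xi) = (G_{\sigma_{u,r}^2} f(\cdot,\Xi))(\mathbb{E}^u B_r)$ and exploit that, conditionally on $\mathcal{F}_s$, the vector $\mathbb{E}^u B_r$ is Gaussian with mean $\mathbb{E}^s B_r$ and variance $\sigma_{s,r}^2 - \sigma_{u,r}^2 = O((u-s)^{2H})$. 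A Bernstein-type bound $\|\nabla G_{\sigma^2}g\|_\infty\leqslant C\sigma^{\gamma-1}\|g\|_{\mathcal{B}_\infty^\gamma}$ combined with a first-order expansion produces $|\delta A_{s,u,t}|\leqslant C(u-s)^{H\alpha}(t-u)^{1+H(\gamma-\alpha)}\|f(\cdot,\Xi)\|_{\mathcal{B}_\infty^\gamma}$ for a suitable $\alpha$ whose admissible range is non-empty precisely because $\gamma>-1/(2H)$, with the Besov norm $\|f(\cdot,\Xi)\|_{\mathcal{B}_\infty^\gamma}$ absorbed into the random control $\lambda$. Lemma~\ref{lem:stsrandomcontrol} then delivers $\|\mathcal{A}_t - A_{S,t}\|_{L^m}\lesssim (t-S)^{1+H\gamma}\,\|\,\|f(\cdot,\Xi)\|_{\mathcal{B}_\infty^\gamma}\,\|_{L^m}$, and combining with the $L^m$ bound on $A_{S,t}$ produces \eqref{eq:regulINT}. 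The main obstacle is this Gaussian cancellation step: because fBm is non-Markovian for $H<1/2$, the identity $\sigma_{s,r}^2 - \sigma_{u,r}^2 = O((u-s)^{2H})$ and the Gaussian decomposition of $\mathbb{E}^u B_r$ conditional on $\mathcal{F}_s$ must be extracted from the Volterra / Mandelbrot--Van Ness representation of $B$, and the sharpness of the threshold $\gamma=-1/(2H)$ is precisely the range in which this cancellation survives.
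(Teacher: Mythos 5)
Your overall skeleton is the standard one and matches how this estimate is actually proved in the reference the paper defers to (\cite[Lemma 3.4]{Haressetal}, cf.\ also \cite{Atetal}): take the conditional germ $A_{s,t}=\mathbb{E}^s\int_s^t f(B_r,\Xi)\,dr$, bound $\|A_{s,t}\|_{L^m}$ by writing $\mathbb{E}^s f(B_r,\Xi)=G_{\sigma^2_{s,r}}f(\cdot,\Xi)(\mathbb{E}^s B_r)$ and combining local nondeterminism with the Gaussian smoothing estimate $\|G_{\sigma^2}g\|_\infty\lesssim \sigma^{\gamma}\|g\|_{\mathcal{B}^\gamma_\infty}$, and close via stochastic sewing plus a mollification/Fatou step (which is indeed what hypothesis (i) is for). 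All of that part of your proposal is fine.

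The gap is in your treatment of the drift condition of the sewing lemma, which you yourself flag as the delicate step. First, the bound you aim for, $|\delta A_{s,u,t}|\leqslant C(u-s)^{H\alpha}(t-u)^{1+H(\gamma-\alpha)}\|f(\cdot,\Xi)\|_{\mathcal{B}^\gamma_\infty}$, has total time-homogeneity $H\alpha+1+H(\gamma-\alpha)=1+H\gamma<1$ for \emph{every} choice of $\alpha$, since $\gamma<0$; so the requirement $\alpha_1+\beta_1>1$ in Lemma~\ref{lem:stsrandomcontrol} can never be met this way, no matter how you split the exponent or what you absorb into $\lambda$ (a fixed random prefactor does not improve the scaling in $t-s$). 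Second, the Gaussian cancellation you describe controls $\mathbb{E}^u B_r-\mathbb{E}^s B_r$ only in $L^p$ (it is conditionally Gaussian, hence a.s.\ unbounded), whereas \eqref{sts1} demands an almost sure pointwise bound. The resolution is that you should not be using the random-control sewing lemma at all here: the correct tool is L\^e's original stochastic sewing lemma \cite{Le}, whose drift condition is $\|\mathbb{E}^s\delta A_{s,u,t}\|_{L^m}\lesssim (t-s)^{1+\varepsilon'}$ with the conditioning at the \emph{left} endpoint $s$. For the conditional germ one has $\mathbb{E}^s\delta A_{s,u,t}=\mathbb{E}^s\int_u^t f(B_r,\Xi)\,dr-\mathbb{E}^s\mathbb{E}^u\int_u^t f(B_r,\Xi)\,dr=0$ by the tower property, so that condition is trivially satisfied and only your (correct) bound $\|\delta A_{s,u,t}\|_{L^m}\lesssim (t-s)^{1/2+(1/2+H\gamma)}$ is needed; no random control and no Volterra-representation analysis of $\sigma^2_{s,r}-\sigma^2_{u,r}$ is required. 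Note also that the paper itself does not prove this lemma but cites \cite[Lemma 3.4]{Haressetal} for the proof, which proceeds exactly via this $\mathbb{E}^s$-cancellation.
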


Intuitively the regularization effect of any solution $X$ will be similar to the one of a fBm, since it \red{is expected to have a similar oscillatory behaviour}. \red{This is because $X-B$ is expected to be more regular than $B$.} For another perspective on this, note that any solution $X$ has a jointly continuous local time by \cite[Theorem 2.16]{Leetal}, \red{where the authors make use of the fact that $X-B$ is of finite variation}.

\begin{lemma} \label{lem:apriori}
\begin{enumerate}[label=(\alph*)]
\item \label{a}\blue{Let $\beta < 0$}. \red{Let $m \in \mathbb{N}$. Then there exists a constant $C>0$ such that for any $b$ such that $(b,\beta)$ satisfies Assumption~\ref{assumption} and $(s,t) \in \Delta_{[0,T]}$, any weak solution $X$ to \eqref{eq:skew} fulfills} 
\blue{
\begin{align}\label{eq:estimate}
  \|X_{s,t}-B_{s,t}\|_{L^m}&\leqslant 
    C\, 
    \|b\|_{\mathcal{B}^\beta_\infty} \Big(1+\|b\|^{{\frac{-H\beta}{1+H(\beta-1)}}}_{\mathcal{B}_\infty^\beta}\Big)(t-s)^{1+\beta H},
\end{align}
where by assumption ${\frac{-H\beta}{1+H(\beta-1)}} \in (0,\infty)$.}

\item \label{b} \blue{Let $\beta <0$} \red{and $m \in \mathbb{N}$. Let $\delta \in 
(0,1+H\beta)$. Then there exists a constant $C>0$ and there exists a nonnegative random variable $Z$ such that for $\phi,h \in \mathcal{C}_b^\infty(\mathbb{R}^d,\mathbb{R}^d) \cap \mathcal{B}_\infty^\beta$ such that $(\phi,\beta)$ and $(h,\beta)$ satisfy Assumption~\ref{assumption},}
\begin{equation}\label{eq:averagingX2}
    \left|\int_s^t h(X_r) dr\right|\leqslant Z\, |t-s|^\delta, 
\end{equation}
where $X$ is the unique strong solution to \eqref{eq:skew} with drift $\phi$. Moreover, 
\blue{
\begin{align} \label{eq:m}
\|Z\|_{L^m}\leqslant 
C \|h\|_{\mathcal{B}_\infty^\beta}\Big(1+\|\phi\|^{\apriori}_{\mathcal{B}_\infty^\beta}\Big).
\end{align}}
\end{enumerate}
\end{lemma}

\begin{proof}

\ref{a}: The proof is almost identical to \cite[Proposition 5.3]{AnRiTa}. Therein we use nonnegativity of the drift in order to obtain an increasing process for $d=1$. The same can be done here for dimension $d\geqslant 1$ obtaining a process that is increasing in each component and therefore we omit \blue{most} details (for a similar argument see the proof of \ref{b}).

\blue{The proof of \cite[Proposition 5.3]{AnRiTa} gives existence of a constant $\tilde{C}>0$ such that for $(s,t) \in \Delta_{[0,T]}$ fulfilling $\tilde{C}\|b\|_{\mathcal{B}_\infty^\beta} (t-s)^{1+H(\beta-1)}<1/2$,
\begin{align}\label{eq:aprioriK}
    \|K_t-K_s\|_{L^m}\leqslant \tilde{C}\|b\|_{\mathcal{B}_\infty^\beta} (t-s)^{1+H\beta}.
\end{align}}

\blue{Choose $l = (3\tilde{C}	\|b\|_{\mathcal{B}^\beta_\infty})^{\frac{1}{H(1-\beta)-1}}$ so that $\tilde{C} 
	\|b\|_{\mathcal{B}^\beta_\infty}l^{1+H(\beta-1)}<1/2$.
	Let $u \in [0,T]$. In particular, by \eqref{eq:aprioriK},
\begin{align} \label{eq:beforeiteration2}
    [K]_{\COO{1+H\beta}{m}{\infty}{[u,(u+l) \wedge T]}}\leqslant \tilde{C} \|b\|_{\mathcal{B}_\infty^\beta}.
\end{align}}
\textcolor{black}{If $l>T$, then \eqref{eq:estimate} follows from \eqref{eq:beforeiteration2}. Hence, we  assume $l\leqslant T$. To obtain \eqref{eq:estimate}, we will iteratively apply inequality 
\eqref{eq:beforeiteration2}. Let $(s,t) \in \Delta_{[0,T]}$ be arbitrary. Let 
\(N = \lceil T/l\rceil\) 
 and let the sequence 
$(s_k)_{k=0}^N$ be defined by $s_{k} = s+ k(t-s)/N$. 
By triangle inequality and \eqref{eq:beforeiteration2}, we get
\begin{align*}
    \|K_t-K_s\|_{L^m}&\leqslant \sum_{k=1}^N \|K_{s_k}-K_{s_{k-1}}\|_{L^m}\\
    &\leqslant \tilde{C} \|b\|_{\mathcal{B}_\infty^\beta} \sum_{k=1}^N (s_k-s_{k-1})^{1+H\beta}\\
    &\leqslant \tilde{C} \|b\|_{\mathcal{B}_\infty^\beta} \sum_{k=1}^N \left(\frac{t-s}{N}\right)^{1+H\beta}\\
    &\leqslant \tilde{C} \|b\|_{\mathcal{B}_\infty^\beta} N^{-H\beta} (t-s)^{1+H\beta}.
\end{align*}}
\textcolor{black}{Using \(N \leqslant 1+\frac{T}{l} \leqslant 2 \frac{T}{l}\leqslant C\|b\|_{\mathcal{B}^\beta_\infty}^{\frac{1}{1+H(\beta-1)}}$, it follows that
\begin{align*}
\|b\|_{\mathcal{B}^\beta_\infty} N^{-H \beta}\leqslant C\|b\|_{\mathcal{B}^\beta_\infty}\|b\|_{\mathcal{B}^\beta_\infty}^{\frac{-H\beta}{1+H(\beta-1)}}
\end{align*}
and therefore \eqref{eq:estimate}.}

\ref{b}: By nonnegativity of $\phi$, $K=X-B$ is monotone in each component. In particular $(v,w)\mapsto |K_w-K_v|$ defines a random control \red{as the choice of norm on $\mathbb{R}^d$ gives superadditivity (see list of notations)}.
For $(s,t) \in \Delta_{[0,T]}$ let
\begin{align*}
    A_{s,t}&:=\int_{s}^{t} h(B_r+K_s)\, dr.
\end{align*}

We apply Lemma~\ref{lem:stsrandomcontrol} for $K^h=\mathcal{A}$ defined by $K^h_{\cdot}=\int_0^{\cdot} h(X_r) dr$. In order to see that all conditions are fulfilled, 
we will show the following for $ u \in [s,t]$ and some constant $C>0$ independent of $s,t$ and $u$:
\begin{enumerate} [label=(\roman*)]
\item \label{enum:1ALT2} $\|A_{s,t}\|_{L^m}\leqslant C \|h\|_{\mathcal{B}_\infty^\beta}(t-s)^{1+H\beta}$;

\item \label{enum:3ALT2} $|\EE^u[\delta A_{s,u,t}]|\leqslant C \|h\|_{\mathcal{B}^\beta_\infty} |K_u-K_{s}|(t-u)^{H(\beta-1)+1}$;

\item \label{enum:2ALT2} $\sum_{i=0}^{N_n-1} 
A_{t^n_i,t^n_{i+1}}\overset{a.s.}{\longrightarrow} K^h_t$ along any sequence of partitions $\Pi_n=\{t_i^n\}_{i=0}^{N_n}$ of $[0,t]$ 
with mesh converging to $0$. 
\end{enumerate}
Notice that $1+H\beta>1/2$ and hence \ref{enum:1ALT2} gives \eqref{sts2}. Furthermore, \ref{enum:3ALT2} 
gives \eqref{sts1} for $\alpha_1=H(\beta-1)+1>1/2-H\geqslant 0$, 
$\beta_1=1$ and $\lambda(s,t):=|K_t-K_s|$.

First, assume that \ref{enum:1ALT2}-\ref{enum:3ALT2}-\ref{enum:2ALT2} hold true. Then by Lemma~\ref{lem:stsrandomcontrol}, there exists a process $D$ such that
\begin{align}\label{eq:KtnALT2}
    |K^h_{t}-K^h_{s}-A_{s,t}|\leqslant C\|h\|_{\mathcal{B}_\infty^\beta} |K_{t}-K_{s}| (t-s)^{H(\beta-1)+1} + D_{s,t},
\end{align}
with $\|D_{s,t}\|_{L^m} \leqslant C \|h\|_{\mathcal{B}_\infty^\beta} (t-s)^{1+H\beta}$.
Hence, by \ref{enum:1ALT2} and Lemma~\ref{lem:apriori}\ref{a} and as $H(\beta-1)+1>0$
\begin{align*}
\|K^h_{t}-K^h_{s}\|_{L^m} &\leqslant C\|h\|_{\mathcal{B}_\infty^\beta} \left(\|K_t-K_s\|_{L^m} (t-s)^{{H(\beta-1)+1}} +  (t-s)^{1+H\beta}\right)\\
&\leqslant C \|h\|_{\mathcal{B}^\beta_\infty}(1+\blue{\|\phi\|^{\apriori}_{\mathcal{B}^\beta_\infty})} (t-s)^{1+H\beta}.
\end{align*}

The result now follows from Kolmogorov's continuity theorem \red{after choosing $m$ large enough, which is no restriction as \eqref{eq:m} then also holds true for any smaller choice of $m$}.

Let us now verify \ref{enum:1ALT2}-\ref{enum:3ALT2}-\ref{enum:2ALT2}.
 
Proof of \ref{enum:1ALT2}: By Lemma~\ref{regulINT} applied to $\Xi=K_{s}$ and $f(z,x) = h(z+x)$, we have 
\begin{align*}
 \|A_{s,t}\|_{L^m}&\leqslant C \big\| \|h(\cdot+K_s)\|_{\mathcal{B}_\infty^{\beta}} \big\|_{L^m} (t-s)^{1+H\beta} .
\end{align*}
Using that $\|h(\cdot+K_s)\|_{\mathcal{B}^\beta_\infty}=\|h\|_{\mathcal{B}^\beta_\infty}$ (see \cite[Lemma A.2]{Atetal} for $d=1$, which easily generalizes to $d>1$), we thus get
\begin{align} \label{regulAnALT2}
    \|A_{s,t}\|_{L^m}&\leqslant C \|h\|_{\mathcal{B}_\infty^{\beta}} (t-s)^{1+H\beta}.
\end{align}

Proof of \ref{enum:3ALT2}: From the local nondeterminism property of fractional Brownian motion (see 
Lemma 7.1 in \cite{Pitt}), we have that 
\begin{equation} \label{eq:LND}
    \sigma^2_{s,t}\geqslant C(t-s)^{2H},
\end{equation}
where
\begin{equation*}
\sigma^2_{s,t} I_d=\text{Var}(B_t-\EE^s[B_t]).
\end{equation*}

Then by \cite[Lemma 3.3]{Haressetal} applied to $\Xi = (K_{s},K_{u})$ and 
$f(z,(x_{1},x_{2}))=h(z+x_{1}) - h(z+x_{2})$, we obtain
\begin{align*}
    |\EE^u[\delta A_{s,u,t}]|&= \Big| \int_u^t \EE^u [h(B_r+K_s)-h(B_r+K_u)] dr\Big| \\
    &= \Big|\int_u^t G_{\sigma^2_{u,r}}h(\EE^u[B_r]+K_s)-G_{\sigma^2_{u,r}}h(\EE^u[B_r]+K_u) dr\Big|\\
    &\leqslant \int_u^t \|G_{\sigma^2_{u,r}}h\|_{\mathcal{C}^1}|K_u-K_s| dr .
   \end{align*}

We now use \cite[Lemma A.3 (iv)]{Atetal} which again easily generalizes to $d>1$. Note that it can be used as $\beta<0$. Hence
\begin{equation*}
\|G_{\sigma^2_{u,r}}h\|_{\mathcal{C}^1}\leqslant C\|h\|_{\mathcal{B}_\infty^\beta} (\sigma_{u,r}^2)^{(\beta-1)/2}.
\end{equation*}
The above, \eqref{eq:LND} and using that $H(\beta-1)>-1$ to ensure integrability gives
\begin{align*}
    |\EE^u[\delta A_{s,u,t}]|
    &\leqslant C \int_u^t |r-u|^{H(\beta-1)} \|h\|_{\mathcal{B}^\beta_\infty} |K_u-K_s|dr \\
    &\leqslant C \|h\|_{\mathcal{B}^\beta_\infty} |K_u-K_s|(t-u)^{H(\beta-1)+1}.
\end{align*}

Proof of \ref{enum:2ALT2}: For a sequence $\Pi_n=\{t_i^n\}_{i=0}^{N_n}$ of partitions of 
$[0,t]$ with mesh size going to $0$, we have
\begin{align*}
    |K_t^h - \sum_{i=0}^{N_{n}-1} A_{t_i^n,t^n_{i+1}}| &\leqslant \sum_i \int_{t^n_i}^{t^n_{i+1}} |h(B_r+K_r)-h(B_r+K_{t_i^n})|dr\\
    &\leqslant \sum_i \int_{t^n_i}^{t^n_{i+1}} \|h\|_{\mathcal{C}^1} |K_r-K_{t^n_i}| dr \\
    &\leqslant \sum_i \|h\|_{\mathcal{C}^1} (t^n_{i+1}-t^n_i) |K_{t^n_{i+1}}-K_{t^n_i}|\\
    &\leqslant \|h\|_{\mathcal{C}^1} |\Pi_n| |K_t-K_0|\overset{n \rightarrow 
    \infty}{\longrightarrow} 0 \text{ a.s.}
\end{align*}
\end{proof}

Below we state the two propositions that ensure tightness and stability of the approximation scheme. The proofs are similar to the ones of \cite[Proposition 7.4 and Proposition 7.6]{AnRiTa}. The only differences are allowing for $d\geqslant 1$ instead of $d=1$ and that Assumption~\ref{assumption} is weaker than the corresponding assumption in there. The latter is due to the crucial Lemma~\ref{lem:apriori}\ref{b} being stated under Assumption~\ref{assumption}. For the reader's convenience we prove both statements.

\begin{proposition}[Tightness] \label{prop:tightness}
Assume $(b,\beta)$ fulfills Assumption~\ref{assumption}. Let \((b^n)_{n \in \mathbb{N}}\) 
be a sequence of 
smooth bounded functions converging to $b$ in $\mathcal{B}_\infty^{\beta-}$. For $n 
\in \mathbb{N}$, let $X^{n}$ be the strong solution to  
\eqref{eq:skew} with initial condition $x_0$ and drift $b^n$. Then there exists a 
subsequence $(n_k)_{k\in \mathbb{N}}$ such that $(X^{n_k},B)_{k \in 
\mathbb{N}}$ converges weakly in the space $[\mathcal{C}_{[0,T]}]^2$.
\end{proposition}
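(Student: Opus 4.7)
The plan is to invoke Prokhorov's theorem, for which it suffices to establish joint tightness of the family $(X^n, B)_{n\in\mathbb{N}}$ in the Polish space $[\mathcal{C}_{[0,T]}]^2$. Because $B$ has a fixed law the marginal family $\{B\}$ is trivially tight, so (marginal tightness implies joint tightness in a product of Polish spaces when one marginal is a single law) the problem reduces to showing tightness of $(X^n)_{n\in\mathbb{N}}$ in $\mathcal{C}_{[0,T]}$. For this I would appeal to the Kolmogorov tightness criterion, which in this setting amounts to producing a uniform-in-$n$ bound of the form $\|X^n_t - X^n_s\|_{L^m} \leqslant C|t-s|^\alpha$ with $m\alpha > 1$.

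The key input is Lemma~\ref{lem:apriori}\ref{a}. Since $b^n \to b$ in $\mathcal{B}_\infty^{\beta-}$, Definition~\ref{def:beta-} yields $M := \sup_n \|b^n\|_{\mathcal{B}_\infty^\beta} < \infty$. In the application of this proposition to Theorem~\ref{thm:weakexistence} we may, and will, take the $b^n$ to be nonnegative (for instance by convolving $b$ with nonnegative mollifiers), so that $(b^n, \beta)$ satisfies Assumption~\ref{assumption} for every $n$ and Lemma~\ref{lem:apriori}\ref{a} is available. Applied to $X^n$ with drift $b^n$ it gives, for any $m \geqslant 2$ and $(s,t) \in \Delta_{[0,T]}$,
\begin{equation*}
    \|X^n_{s,t} - B_{s,t}\|_{L^m} \leqslant C(1 + M^2)(t-s)^{1+\beta H},
\end{equation*}
with the constant independent of $n$. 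Combined with the standard Gaussian moment bound $\|B_{s,t}\|_{L^m} \leqslant C_m (t-s)^H$, and using that $1 + \beta H > 1/2 \geqslant H$ (from Assumption~\ref{assumption} and $H \leqslant 1/2$), one obtains $(t-s)^{1+\beta H} \leqslant T^{1/2-H}(t-s)^H$ and therefore $\|X^n_{s,t}\|_{L^m} \leqslant C'(t-s)^H$ uniformly in $n$.

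Choosing $m > 1/H$ so that $mH > 1$, and using that $X^n_0 = x_0$ is deterministic, the Kolmogorov tightness criterion then delivers tightness of $(X^n)_n$ in $\mathcal{C}_{[0,T]}$; joint tightness of $(X^n, B)$ and Prokhorov's theorem conclude. The only delicate point in the whole argument is securing the uniformity in $n$ of the constant in Lemma~\ref{lem:apriori}\ref{a}, which boils down to the uniform $\mathcal{B}_\infty^\beta$-bound on $(b^n)$ (automatic from $\mathcal{B}_\infty^{\beta-}$-convergence) and to the nonnegativity of the approximants needed to make Lemma~\ref{lem:apriori}\ref{a} applicable; both are handled at the level of the choice of the approximating sequence.
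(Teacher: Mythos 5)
Your proof is correct and follows essentially the same route as the paper's: both arguments rest on the uniform-in-$n$ \emph{a priori} bound of Lemma~\ref{lem:apriori}\ref{a} (available since $\sup_n\|b^n\|_{\mathcal{B}_\infty^\beta}<\infty$ by Definition~\ref{def:beta-}), followed by a standard tightness criterion and Prokhorov's theorem. The only differences are cosmetic: the paper applies Arzel\`a--Ascoli and Markov's inequality directly to $K^n=X^n-B$, whose increments already carry the exponent $1+H\beta>1/2$, and then upgrades to joint tightness of $(K^n,B)$, whereas you fold the fBm increments back in and run Kolmogorov's tightness criterion on $X^n$ itself. Your explicit insistence that the approximants $b^n$ be taken nonnegative so that $(b^n,\beta)$ satisfies Assumption~\ref{assumption} (and hence Lemma~\ref{lem:apriori}\ref{a} applies to $X^n$) is a point the paper leaves implicit, and it is handled correctly by your choice of mollified approximations.
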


\begin{proposition}[Stability] \label{prop:stability}
	Assume $(b,\beta)$ fulfills Assumption~\ref{assumption}. Let $(\tilde{b}^k)_{k \in \mathbb{N}}$ be a sequence of smooth bounded functions 
	converging to $b$ in $\mathcal{B}_\infty^{\beta-}$. Let $\tilde{B}^k$ have the same law as 
	$B$ \red{and assume all $\tilde{B}^k$ are defined on the same filtered probability space}. 
	We consider $\tilde{X}^k$ to be the unique strong solution to \eqref{eq:skew} for $B=\tilde{B}^k$, drift $\tilde{b}^k$ and initial 
	condition 
	$x_0$. 
	We assume that  there exist stochastic processes $\tilde{X},\tilde{B}: [0,T]  
	\rightarrow \mathbb{R}^d$ such that $(\tilde{X}^k,\tilde{B}^k)_{k \in \mathbb{N}}$ converges 
	to 
	$(\tilde{X},\tilde{B})$ on $[\mathcal{C}_{[0,T]}]^2$ in probability. Then $\tilde{X}$ fulfills 
	\eqref{solution1} and \eqref{approximation2} from Definition~\ref{def:solution}.
\end{proposition}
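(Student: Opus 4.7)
The plan is to verify the conditions of Definition~\ref{def:solution} for $\tilde X$ by passing to the limit in the approximating equations, using Lemma~\ref{lem:apriori}\ref{b} as the quantitative control on the drift term. We may and do assume (by mollifying $b$ against a nonnegative kernel) that each $\tilde b^k$ is nonnegative.

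First, set $K^k_t := \tilde X^k_t - x_0 - \tilde B^k_t = \int_0^t \tilde b^k(\tilde X^k_r)\,dr$. Nonnegativity of $\tilde b^k$ makes each $K^k$ coordinatewise nondecreasing. Since $(\tilde X^k, \tilde B^k) \to (\tilde X, \tilde B)$ in $[\mathcal{C}_{[0,T]}]^2$ in probability, $K^k$ converges uniformly in probability to $K := \tilde X - x_0 - \tilde B$, which is again coordinatewise nondecreasing. This identifies the process $K$ required by Definition~\ref{def:solution} and gives \eqref{solution1}.

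For \eqref{approximation2}, fix an arbitrary sequence $(b^n)$ of smooth bounded functions with $b^n \to b$ in $\mathcal{B}_\infty^{\beta-}$, pick $\beta' \in (-1/(2H), \beta)$, and decompose
\begin{equation*}
\int_0^t b^n(\tilde X_r)\,dr - K_t = \underbrace{\int_0^t \bigl[b^n(\tilde X_r) - b^n(\tilde X^k_r)\bigr] dr}_{(\mathrm{I})_t} + \underbrace{\int_0^t (b^n - \tilde b^k)(\tilde X^k_r)\,dr}_{(\mathrm{II})_t} + \underbrace{K^k_t - K_t}_{(\mathrm{III})_t}.
\end{equation*}
For fixed $n$, smoothness of $b^n$ combined with $\tilde X^k \to \tilde X$ uniformly in probability yields $\sup_t|(\mathrm{I})_t| \to 0$ in probability as $k\to\infty$ (via dominated convergence along a.s.-convergent subsequences), while $\sup_t|(\mathrm{III})_t|\to 0$ by the previous paragraph. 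For $(\mathrm{II})$, apply Lemma~\ref{lem:apriori}\ref{b} with $\phi = \tilde b^k$, exponent $\beta'$, and test function $h = b^n - \tilde b^k$ to produce a random variable $Z^{n,k}$ with
\begin{equation*}
\EE[Z^{n,k}] \leqslant C\,\|b^n - \tilde b^k\|_{\mathcal{B}_\infty^{\beta'}}\bigl(1 + \|\tilde b^k\|_{\mathcal{B}_\infty^{\beta'}}^2\bigr)
\end{equation*}
and $\sup_{t\in[0,T]}|(\mathrm{II})_t| \leqslant Z^{n,k}\, T^\delta$ for some fixed $\delta \in (0,1+H\beta')$. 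Now extract a diagonal subsequence $k(n)\to\infty$ so that $(\mathrm{I})$ and $(\mathrm{III})$ are small in probability and $\|\tilde b^{k(n)}-b\|_{\mathcal{B}_\infty^{\beta'}} \to 0$; the triangle inequality then forces $\|b^n - \tilde b^{k(n)}\|_{\mathcal{B}_\infty^{\beta'}} \to 0$, while $\sup_k \|\tilde b^k\|_{\mathcal{B}_\infty^{\beta'}} < \infty$ stays bounded, so $\EE[Z^{n,k(n)}]\to 0$ and $(\mathrm{II})$ vanishes in probability by Markov. Combining the three bounds gives \eqref{approximation2}.

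The delicate point is the application of Lemma~\ref{lem:apriori}\ref{b} to the signed function $h = b^n - \tilde b^k$, since Assumption~\ref{assumption} as stated requires $h$ to be nonnegative. Inspection of the proof of Lemma~\ref{lem:apriori}\ref{b} shows that nonnegativity is only used for the drift $\phi$ (to produce the random control $|K_w-K_v|$), whereas $h$ enters exclusively through $\|h\|_{\mathcal{B}_\infty^{\beta'}}$; hence the estimate extends verbatim to signed $h$. The remaining work is the routine diagonal extraction and the choice of the nonnegative approximation $(\tilde b^k)$ of $b$.
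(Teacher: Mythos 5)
Your proof is correct and follows essentially the same route as the paper: the identical three-term decomposition, the bound on the middle term via Lemma~\ref{lem:apriori}\ref{b} applied with $\phi=\tilde b^k$, $h=b^n-\tilde b^k$ and an intermediate regularity $\beta'\in(-1/(2H),\beta)$, and a diagonal choice $k(n)$ for the other two terms. Your explicit remark that the nonnegativity in Assumption~\ref{assumption} is only needed for the drift $\phi$ (to produce the random control) and not for the signed test function $h$ is a point the paper leaves implicit, and is a welcome clarification rather than a deviation.
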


\begin{proof}[Proof of Proposition~\ref{prop:tightness}]
\blue{Assume w.l.o.g. that $\beta<0$.} Let $K^{n}_t:=\int_0^t b^n(X^{n}_r)dr$. 
For $M>0$, let  
\begin{align*}
    A_M:=\{f \in \mathcal{C}_{[0,T]}: f(0)=0,\ |f(t)-f(s)|\leqslant M (t-s)^{1+H\beta},\ \forall (s,t) \in \Delta_{[0,T]}\}.
\end{align*}
By Arzel\`a-Ascoli, $A_M$ is compact in $\mathcal{C}_{[0,T]}$. 
Applying Lemma~\ref{lem:apriori}\ref{a} 
and Markov's inequality we get
\begin{align*}
    \mathbb{P}(K^{n} \notin A_M) &\leqslant \mathbb{P}(\exists (s,t) \in 
    \Delta_{[0,T]}:|K^{n}_{s,t}|> M (t-s)^{1+H\beta})\\
&\leqslant C\, \sup_{n \in \mathbb{N}} \blue{
    \, 
    (1+\|b^n\|^{\apriori}_{\mathcal{B}_\infty^\beta})} \, M^{-1}.
\end{align*}
Hence, $(K^{n})_{n \in \mathbb{N}}$  is tight in $\mathcal{C}_{[0,T]}$ and  therefore $(K^{n},B)_{n \in \mathbb{N}}$ is tight in $[\mathcal{C}_{[0,T]}]^2$. Using Prokhorov's 
Theorem, there exists a subsequence $(n_k)_{k \in \mathbb{N}}$ such that $(K^{{n_k}},B)_{k 
\in \mathbb{N}}$ converges weakly in the space $[\mathcal{C}_{[0,T]}]^2$, and so does 
$(X^{{n_k}},B)_{k \in \mathbb{N}}$.
\end{proof}

\begin{proof}[Proof of Proposition~\ref{prop:stability}]
\blue{Assume w.l.o.g. that $\beta<0$ and \(X_0 = 0\).} Let
		\(\tilde{K}:=\tilde{X}-\tilde{B}\), 
	so that \eqref{solution1} is verified. Let $(b^n)_{n \in 
		\mathbb{N}}$ be any sequence of smooth bounded functions converging to $b$ in 
	$\mathcal{B}_\infty^{\beta-}$. 
	To check that $\tilde{K}$ and $\tilde{X}$ fulfill \eqref{approximation2} 
	from Definition~\ref{def:solution}, we have to show that 
	\begin{align}
		\lim_{n \rightarrow \infty} \sup_{t \in [0,T]} \left|\int_0^t b^n(\tilde{X}_r) 
		dr-\tilde{K}_t\right|=0 
		\text{ in probability}. \label{convergence}
	\end{align}	
	By the triangle inequality we have for $k,n \in \mathbb{N}$ and $t \in [0,T]$,
	\begin{align} \label{I1I2I3}
		\left|\int_0^t b^n(\tilde{X}_r) dr-\tilde{K}_t\right|\leqslant &\left|\int_0^t b^n(\tilde{X}_r) 
		dr-\int_0^t 
		b^n(\tilde{X}_r^k)dr\right|+\left|\int_0^t b^n(\tilde{X}_r^k)dr-\int_0^t 
		\tilde{b}^k(\tilde{X}_r^k)dr\right|\nonumber\\
		&+\left|\int_0^t \tilde{b}^k(\tilde{X}_r^k)dr-\tilde{K}_t\right|=:A_1+A_2+A_3.
	\end{align}
	We show that all three summands in \eqref{I1I2I3} 
	converge to $0$ uniformly on $[0,T]$ in probability as $n\to \infty$, 
	choosing $k=k(n)$ accordingly.
	
	$\mathbf{A_1}$: Notice that
	\begin{align*}
		\left|\int_0^t b^n(\tilde{X}_r) dr-\int_0^t b^n(\tilde{X}_r^k)dr\right|&\leqslant 
		\|b^n\|_{\mathcal{C}^1} \int_0^t |\tilde{X}_r-\tilde{X}_r^k| dr\\
		&\leqslant \|b^n\|_{\mathcal{C}^1}\,  T \sup_{t \in [0,T]} 
		 |\tilde{X}_t-\tilde{X}_t^k|.
	\end{align*}
	The result follows as for any $\varepsilon>0$, by assumption, we can choose a sequence $(k(n))_{n \in 
	\mathbb{N}}$ such that 
	\begin{align*}
		\mathbb{P}\Big(\|b^n\|_{\mathcal{C}^1}\, T\sup_{t \in [0,T]} 
		|\tilde{X}_t-\tilde{X}_t^{k(n)}| > \varepsilon\Big)< \frac{1}{n}, ~ \forall n \in 
		\mathbb{N}.
	\end{align*}

$\mathbf{A_2}$: Let $\beta^\prime \in (-1/(2H),\beta)$. 
	By 
	Lemma~\ref{lem:apriori}\ref{b} applied to $\tilde{X}^k$, $h=b^n-\tilde{b}^k$ and $\beta^\prime$ 
	instead of 
	$\beta$, there exists a random variable 
	$Z_{n,k}$ with
	\begin{align} \label{eq:expectation}
		\EE[Z_{n,k}]&\leqslant C\, 
			\|b^n-\tilde{b}^k\|_{\mathcal{B}_\infty^{\beta^\prime}}(1+\blue{\|\tilde{b}^k\|^{\aprioritwo}_{\mathcal{B}_\infty^{\beta^\prime}}})\nonumber\\
		& \leqslant C\, 
			(\|b^n-b\|_{\mathcal{B}_\infty^{\beta^\prime}}+\|\tilde{b}^k-b\|_{\mathcal{B}_\infty^{\beta^\prime}})
			 \, 
			(1+\sup_{m \in \mathbb{N}}\blue{\|\tilde{b}^m\|^{\aprioritwo}_{\mathcal{B}_\infty^{\beta^\prime}}}),
	\end{align}
	for $C$ independent of $k,n$, such that we have
	\begin{align*}
		\sup_{t \in [0,T]}\left|\int_0^t b^n(\tilde{X}_r^k)dr-\int_0^t 
		\tilde{b}^k(\tilde{X}_r^k)dr\right|\leqslant 
		Z_{n,k}(1+T).
	\end{align*}
	Using Markov's inequality and \eqref{eq:expectation} we obtain that
	\begin{align*}
		\mathbb{P}&\left(\sup_{t \in [0,T]} \left|\int_0^t b^n(\tilde{X}_r^k)dr-\int_0^t 
		\tilde{b}^k(\tilde{X}_r^k)dr\right|>\varepsilon\right)\leqslant \varepsilon^{-1}\, \EE[Z_{n,k}] 
		\, (1+T)\\
		&\qquad \qquad \leqslant C\, \varepsilon^{-1}\, (1+T)\, 
			(\|b^n-b\|_{\mathcal{B}_\infty^{\beta^\prime}}+\|\tilde{b}^k-b\|_{\mathcal{B}_\infty^{\beta^\prime}})
			 \, 
			(1+\sup_{m \in \mathbb{N}}\blue{\|\tilde{b}^m\|_{\mathcal{B}_\infty^{\beta^\prime}}^{\aprioritwo}}).
	\end{align*}
	Choosing $k=k(n)$ as before gives the convergence.

$\mathbf{A_3}$: Recall that $\tilde{X}^k_t=\int_{0}^t 
		\tilde{b}^k(\tilde{X}^k_r) 
	dr+\tilde{B}^k_t$. Hence,
	\begin{align*}
		\sup_{t \in [0,T]} \left|\int_0^t \tilde{b}^k(\tilde{X}^k_r) dr-\tilde{K}_t\right| \leqslant 
		\sup_{t \in 
		[0,T]}(|\tilde{X}^k_t-\tilde{X}_t|+|\tilde{B}_t^k-\tilde{B}_t|).
	\end{align*}
	Since by assumption $(\tilde{X}^k,\tilde{B}^k)_{k \in \mathbb{N}}$ converges to 
	$(\tilde{X},\tilde{B})$ on 
	$[\mathcal{C}_{[0,T]}]^2$ in probability, we get the result.
\end{proof}

\begin{proof}[Proof of Theorem~\ref{thm:weakexistence}]

Let $(b^n)_{n \in \mathbb{N}}$ be a sequence of 
smooth bounded functions converging to $b$ in $\mathcal{B}_\infty^{\beta-}$. 
By Proposition~\ref{prop:tightness}, 
there exists a subsequence $(n_k)_{k \in \mathbb{N}}$ such that $(X^{n_k}, B)_{k 
\in \mathbb{N}}$ converges weakly in $[\mathcal{C}_{[0,T]}]^2$.  
W.l.o.g., 
we assume that $(X^{n}, B)_{n \in \mathbb{N}}$ converges weakly. By 
Skorokhod's representation Theorem, there exists a sequence of random variables 
$(Y^{n},\hat{B}^n)_{n \in \mathbb{N}}$ defined on a common probability space 
$(\hat{\Omega},\hat{\mathcal{F}},\hat{P})$, such that
\begin{align} \label{samelaw}
    \text{Law}(Y^{n},\hat{B}^n)=\text{Law}(X^{n}, B), \ \forall n \in \mathbb{N},
\end{align}
and  $(Y^{n},\hat{B}^n)$ converges a.s. to some $(Y,\hat{B})$ in $[\mathcal{C}_{[0,T]}]^2$.
As $X^{n}$ solves the SDE \eqref{eq:skew} with drift $b^n$, we know by \eqref{samelaw} that $Y^{n}$ also solves 
\eqref{eq:skew} with drift $b^n$ and $\hat{B}^n$ 
instead of $B$. Since $X^{n}$ is a strong solution, we have that $X^{n}$ 
is adapted to $\mathbb{F}^B$. Hence by \eqref{samelaw}, we know that $Y^{n}$ is adapted to $\mathbb{F}^{\hat{B}^n}$ as the conditional laws of 
$Y^{n}$ and $X^{n}$ agree and therefore $Y^n$ is a strong solution to 
\eqref{eq:skew} with $\hat{B}^n$ instead of $B$.

By Proposition~\ref{prop:stability}, we know that $Y$ fulfills (\ref{solution1}) and 
(\ref{approximation2}) from Definition~\ref{def:solution} with $\hat{B}$ instead of $B$. Clearly $Y$ is adapted to the filtration $\hat{\mathbb{F}}$ defined by 
$\hat{\mathcal{F}}_t:= \sigma(Y_{s},\hat{B}_{s},s \in [0,t])$. 
By \eqref{operatorA} and \eqref{operatortildeA}, we have
\begin{align}
\hat{B}^n&=\bar{\mathcal{A}}(W^n) \text{ a.s. and}\label{representation1}\\
W^n&=\mathcal{A}(\hat{B}^n) \text{ a.s.},\label{representation2}
\end{align}
for a sequence $W^n$ of Brownian motions with 
$\mathbb{F}^{\hat{B}^n}=\mathbb{F}^{W^n}$. By definition, for $(s,t) \in \Delta_{[0,T]}$, $W^n_t-W^n_s$ is 
independent of $\mathcal{F}^{W^n}_s=\mathcal{F}^{\hat{B}^n}_s= 
\sigma(Y^{n}_{r},\hat{B}^n_{r}, r \in [0,s])$. By \eqref{representation1}, 
\eqref{representation2}, Lemma~\ref{lem:operatorcontinuity} and the 
a.s.-convergence of $\hat{B}^n$, we get that $W^n$ converges a.s. uniformly on 
$[0,T]$ to a Brownian motion $W$ such that $\hat{B}=\bar{\mathcal{A}}(W)$ and $\hat{B}$ and $W$ 
generate the same filtration. Hence, we can deduce that $W_t-W_s$ is independent of 
$\hat{\mathcal{F}}_s$ and so $W$ is an $\hat{\mathbb{F}}$-Bm.
Therefore, $\hat{B}$ is a $\hat{\mathbb{F}}$-fBm and $Y$ is adapted to $\hat{\mathbb{F}}$. Hence $Y$ is a weak solution. 
\end{proof}

\begin{appendices}

\section{Previous research on \eqref{eq:skew}}\label{summary}

In this section we provide an overview of the results in recent years for 
Equation \eqref{eq:skew} in case of $B$ being a fractional Brownian motion with $H \neq 1/2$ 
(see Theorem~\ref{thm:overview}). A by now classical approach is to rewrite \eqref{eq:skew} 
as a nonlinear Young integral. Hence, we briefly describe this approach first.

Consider the rewritten SDE
\begin{align} \label{rewritten}
X_t=x_0+\int_0^t b(X_s) ds + B_t \iff \tilde{X}_t=x_0+\int_0^t b(\tilde{X}_s+B_s) ds, \quad t \in [0,T], x_0 \in \mathbb{R}^d.
\end{align}

For a continuous bounded vector valued function $b:\mathbb{R}^d \rightarrow \mathbb{R}^d$, define the averaging operator $T^B$ by 
\begin{align} \label{traditionalaveraging}
    T^\mathit{B}_t b(x):= \int_0^t b(x+B_r) \, dr, ~ \mbox{ for } (t,x) \in [0,T]\times \mathbb{R}^d.
\end{align} 
Then can rewrite the integral on the right hand side of \eqref{rewritten} by
\begin{align} \label{eq:reformulation}
    \int_0^t b(\tilde{X}_r+B_r)\, dr &= \lim_{n \rightarrow \infty} \sum_{i=1}^{N_n-1} 
    \int_{t_i^n}^{t_{i+1}^n} b(\tilde{X}_{t_i^n}+ B_r) \, dr \nonumber\\
    &= \lim_{n \rightarrow \infty} \sum_{i=1}^{N_n-1} T^{B}_{t_i^n,t_{i+1}^n} 
    b(\tilde{X}_{t_i}^n) \nonumber\\
    &= \int_0^t T_{dr}^{B} b(\tilde{X}_r) ,
\end{align}
with the final equality being only formal at this point. One can give a rigorous definition of this so-called nonlinear Young integral. For a detailed review see \cite{Galeati}. 

The averaging operator $T^{B} b$ can also be written as a convolution against the occupation 
measure of the noise. This viewpoint was taken in \cite{HarangPerkowski,AnRiTa}. Either way, 
for $H$ sufficiently small (i.e. $B$ oscillating sufficiently fast) one can extend the definition of 
the averaging operator $T^B b$ to singular or even distributional $b$. Then, given that 
expression in \eqref{eq:reformulation} is well-defined, a solution to \eqref{eq:skew} can be 
defined to be a solution to the corresponding nonlinear Young integral equation. Note that the 
above has a deterministic flavor as properties for each fractional Brownian path are needed. Stochastic sewing might still be useful to obtain regularity of the averaging operator (see \cite{Le,GaleatiGerencser, HarangPerkowski}). If the solution fulfills a regularity condition, the definition of a solution via nonlinear 
Young integrals and Definition~\ref{def:solution} coincide (see \cite[Remark 
8.5]{GaleatiGerencser} and \cite[Theorem 2.15]{AnRiTa}).

The following Theorem gives an overview of the developments for Equation \eqref{eq:skew} and related equations in recent years. In \ref{(1)} as well as partly in \ref{(2)} and \ref{(3)} a nonlinear Young integral approach as described above was followed. To ensure readability, some results do not represent the full scope of the actual results proven. Below we partly also allow for time-dependent drift $b$.

\begin{theorem} \label{thm:overview}
\begin{enumerate}[label=(\alph*)]
\item \label{(1)}\cite[Theorem 1.9]{CatellierGubinelli} combined with \cite[Theorem 3.13]{HarangGaleati}: Let $b \in \mathcal{B}^\beta_\infty$ for $\beta>1-1/(2H)$. Then there exists a strong solution to \eqref{eq:skew} and path-by-path uniqueness holds (i.e. uniqueness to the integral equation for almost every realization of the noise, giving a stronger notion of uniqueness than the classical notion of pathwise uniqueness)
\item \label{(2)}\cite[Theorem 2.8, Theorem 2.10 and Corollary 2.6]{AnRiTa}: Let $d=1$. For $H<\sqrt{2}-1$, there exists a weak solution for a finite measure $b$. Additionally, for $b \in \mathcal{B}^\beta_\infty$ with $\beta>1/2-1/(2H)$, there exists a weak solution. \red{Pathwise uniqueness and strong existence is shown for $b \in \mathcal{B}^\beta_p$ for $\beta \in \mathbb{R}$ and $p \in [1,\infty]$ with
\begin{align*}
\beta> 1-\frac{1}{2H} \text{ and } \beta-\frac{1}{p}\geqslant 1-\frac{1}{2H}.
\end{align*}}
\item \label{Haress} \red{\cite[Theorem 2.3]{Haressetal}: Generalization of the latter two results in \ref{(2)} to $d$ dimensions; i.e. there exists a weak solution for $b \in \mathcal{B}^\beta_\infty$ with $\beta>1/2-1/(2H)$ and pathwise uniqueness and strong existence holds for $b \in \mathcal{B}^\beta_p$ with 
\begin{align*}
\beta> 1-\frac{1}{2H} \text{ and } \beta-d/p\geqslant 1-\frac{1}{2H}.
\end{align*}}
\item \label{(3)}\cite[Theorem 1.4]{GaleatiGerencser}: Strong existence, path-by-path uniqueness, Malliavin differentiability and existence of a flow for time-dependent drift $b \in L^q([0,T],\mathcal{B}^\beta_\infty)$ for $q \in (1,2]$ and $\beta>1-1/(Hq^\prime)$. Additionally weak existence for $b \in L^q([0,T],\mathcal{B}^\beta_\infty)$ for $q \in (2,\infty]$ and 
\begin{align*}
\beta>\Big(1-\frac{1}{Hq^\prime}\Big)\vee \Big(\frac{1}{2}-\frac{1}{2H}\Big)
\end{align*}
is shown. \red{In both of the above $q^\prime$ fulfills $1/q+1/{q^\prime}=1$.}
\item \label{(4)}\cite[Theorem 2.11 and Theorem 2.14]{Leetal}: For a finite measure $b$: Existence of a weak solution for $H<1/(1+d)$ for any $d \in \mathbb{N}$; pathwise uniqueness and existence of a strong solution for $H<(\sqrt{13}-3)/2$ and $d=1$.
\end{enumerate}
\end{theorem}

\begin{remark}
In particular, Theorem~\ref{cor:measure} can be seen as an extension of the existence result in \ref{(2)} and as a slightly weaker result than \ref{(4)}.
\end{remark}

Finally let us mention that also similar equations were investigated, such as the case of local time drift \cite{AmineEtAl,Banos}, distribution-dependent drift \cite{HarangGaleati}, multiplicative fractional noise \cite{DareiotisGerencser}, L\'evy noise \cite{KrempPerkowski}, ``infinitely regularizing" noises \cite{HarangPerkowski} and regular noise \cite{Gerencser}.

\section{Besov spaces} \label{sec:Besov}

In this section we briefly recall the definition of Besov spaces. \red{For a complete presentation see \cite{BaDaCh}.}

\begin{definition}[Partition of unity] \label{def:POF}

Let $\chi, \rho \in C^\infty(\mathbb{R}^d,\mathbb{R})$ be radial functions and for \(j\geqslant 0\), 
$\rho_j(x)=\rho(2^{-j}x)$.
We assume that $\chi$ is supported on a ball around $0$ and $\rho$ is supported on an annulus. Moreover, we have 
\begin{align}
\chi + \sum_{j\geqslant 0} \rho_j &\equiv 1,\\
\supp(\chi)\cap \supp(\rho_j)&=\emptyset, \ \forall j\geqslant 1,\\
\supp(\rho_j)\cap \supp(\rho_i)&=\emptyset, \text{ if } |i-j|\geqslant 2.
\end{align}
Then we call the pair $(\chi,\rho)$ a partition of unity. 
\end{definition}
Existence of a partition of unity  is proven in \cite[Prop. 2.10]{BaDaCh}. 
Throughout the paper such a partition is fixed.

\begin{definition}[Littlewood-Paley blocks] \label{def:LP}
Let $f$ be an $\mathbb{R}^d$-valued tempered distribution. We define its $j$-th Littlewood-Paley block by
\[
    \boldsymbol{\Delta}_j f=\begin{dcases}
        \mathcal{F}^{-1}(\rho_j \mathcal{F}(f))~ \text{ for } j\geqslant 0\, , \\
        \mathcal{F}^{-1}(\chi \mathcal{F}(f))~ \text{ for } j=-1\, ,\\
        0~ \text{ for } j\leqslant -2,
    \end{dcases}
\]
where \(\mathcal{F}\) and \(\mathcal{F}^{-1}\) denote the Fourier transform and its inverse.
\end{definition}

\begin{definition}
For $s \in \mathbb{R}$ and $p \in [1,\infty]$, let the nonhomogeneous Besov 
space $\mathcal{B}_{p}^s$ be the space of $\mathbb{R}^d$-valued tempered distributions $u$ such that
\begin{align*}
    \|u\|_{\mathcal{B}_{p}^s}:= \sup_{j \in \mathbb{Z}}2^{js}\|\boldsymbol{\Delta}_j u\|_{L^p(\mathbb{R}^d)}<\infty.
\end{align*}

\end{definition}

\begin{remark}\label{rem:finitemeasure}
Note that any finite measure lies in $\mathcal{B}^0_1$ by similar computations as in \cite[Proposition 2.39]{BaDaCh}. Therefore, after an embedding of Besov spaces \red{(see \cite[Proposition 2.71]{BaDaCh})}, it lies in $\mathcal{B}^{-d}_\infty$ as well.
\end{remark}

\end{appendices}

\begin{acknowledgement}
I would like to kindly thank the anonymous referee for careful reading and remarks that helped improving the paper. I thank both my advisors Alexandre Richard and Etienne Tanr\'e for valuable suggestions and Lucio Galeati for pointing out an error in an estimate.
\end{acknowledgement}

\end{document}